\newtheorem{theorem}{Theorem}[section]
\theoremstyle{proposition}
\theoremstyle{corollary}
\theoremstyle{definition}
\theoremstyle{remark}
\numberwithin{equation}{section}
\begin{document}
\author{Kubra Nari}
\email{nari15@itu.edu.tr}
\address{Informatics Institute, Istanbul Technical University, Turkey}
\author{Enver Ozdemir}
\email{ozdemiren@itu.edu.tr}
\address{Informatics Institute, Istanbul Technical University, Turkey}

\author{Neslihan Aysen Ozkirisci}
\email{aozk@yildiz.edu.tr}
\address{Department of Mathematics,Yildiz Technical University, Turkey}

\title{Strong Pseudo Primes To Base 2}
\begin{abstract}
In this work, we add an additional condition to strong pseudo prime test to base 2. Then, we provide theoretical and heuristics evidences  showing that the resulting algorithm catches all composite numbers. Our method is based on the structure of singular cubics' Jacobian groups on which we also define an effective addition algorithm. 
\end{abstract}

\maketitle

\specialsection*{\bf \Large Introduction}

\indent It has been an interest of scientists that for a given integer $n$ construct a method to decide if $n$ has a non-trivial divisor or not. In theory, it is very easy to say whether an integer has a non-trivial factor or not since  a number is composite if and only if it has a factor less than $\sqrt{n}$. The popularity of digital communication along with cryptographic methods which employ prime numbers forces to design an efficient way in practice deciding if $n$ has a factor or not. Even though, the first algorithms for primality utilize basic group theory, they are still more practical and popular than the sophisticated ones which utilize advanced mathematical tools like elliptic curve groups and cyclotomic fields. \\
\indent Let $n$ be an integer, the multiplicative group modulo $n$ has order $n-1$ if $n$ has no non-trivial factor. This fact has been known for a long time and employed in many ways. For example an easy consequence of it is Fermat's little theorem. Another application \cite{RAB} was presented by  M. Rabin who offered a method based on this to detect composite integers. His method works as follows:
\begin{enumerate}
	\item For a given odd integer $n$, find $e$ and odd integer $m$ such that $$ n-1=2^em$$
	\item Select a random number $a$.
	\item See if  $a^{2^im}=c_i$ is congruent to 1 or -1 modulo $n$ for any $0 \le i \le e-1$.  
	\item If none of $c_i$ is 1 or -1 for $i=0,\dots, e-1$, then declare $n$ a composite number.
\end{enumerate}
   
 Unfortunately, the test doesn't say primality of the number $a$ if one of $c_i$ is 1 or -1. In general, this test repeats for a few distinct $a$'s and  the number $a$ is selected to be a prime  starting with $2$ and continue with consecutive primes 3, 5, 7, 11 and so on. The  test with a random integer $a$  is called strong pseudo prime test to base $a$. If the test doesn't say the number $n$ is composite then $n$ is called a {\it strong pseudo prime to base $a$.} The set of all strong pseudo primes to base $a$ is denoted by $spsp(a$).  If an integer $n\in spsp(a) $ then the probability of $n$ being a prime is more than $75\%$ \cite{SCHOOF}. G. Miller \cite{MIL} later showed that if $n$ is $spsp(a)$ for all prime integers $a\le 2\log^2 n$ then $n$ must be prime if GRH is correct. \\
 \indent Two significant algorithms \cite{ADLPOM, ATKMOR} were presented for  primality testing in 80s. The first one describes a deterministic method to decide whether an integer $n$ is really a prime number, but it fails to be a polynomial time algorithm \cite{ADLPOM, SCHOOF}. The second one decides an integer $n$ is   prime if it can construct an elliptic curve group $E$ modulo $n$ such that $E$ has a point of  order larger than a certain integer depending on $n$. This algorithm is much faster than the first one but it might not be able to produce the desired elliptic curve group $E$ therefore the algorithm is probabilistic \cite{SCHOOF}. We should also mention that in early 2000, a deterministic and polynomial time algorithm \cite{AGR} was presented but for a given integer $n$ the algorithm's running time is $O(\log^{12+\epsilon}n)$ for $\epsilon >0$ \cite{SCHOOF}.\\
 \indent The aforementioned \cite{ADLPOM,ATKMOR} algorithms for primality test are not as efficient as the strong pseudo prime (spsp) test. Even though, the spsp test doesn't guarantee the primality of an integer, it has been widely utilized in commercial applications. The efficiency of spsp test in addition to not being a complete primality test encourage scientists to open new avenues for  research to improve the test. In that respect, the work \cite{PSW} presented an idea that aim to find all composite numbers which are the elements of spsp to certain bases. A partial answer, which gives all elements of spsp less than a certain integer, is also presented in it. The idea later expanded in many ways \cite{JAE, ZHA1, ZHA2}. For example, suppose one finds the set $S$ of all composite numbers less than $t$ which are spsp to prime bases $p_1,\dots,p_s$ then we have an efficient primality test for all numbers less than $t$. In fact, let $n<t$ be an element of $spsp(p_1,\dots,p_s)$ and $n\not\in S$ then $n$ must be a prime integer. Therefore, the research has been focused on to classify composite strong pseudo prime numbers to bases 2, 3, 5 and so on. For example, if $n$ is in $spsp(2)$, intuitively  $\gcd(n-1,p-1)$ must be large for any prime factor $p$ of $n$. In fact, the experimental results show that for some prime factors $p$ of $n$ we have $\gcd(d(n-1),(p-1))=p-1$ and $d<20$ for $n$ with at most 15 digits \cite{JAE}. \\
 \indent The research on the classification of strong pseudo primes to small bases revealed  useful information. The experimental results in the work \cite{JAE} shows that if $n\in spsp(2)$ and composite then $n$ has only two prime factors, say $p$ and $q$, with a probability more than $95\%$. In addition to this, $\gcd(s_1(p-1),d_1(q-1))=p-1$ and $\gcd(s_2(p-1),d_2(n-1))=p-1$ and $s_1,s_2,d_1,d_2<20$.\\
 \indent In this research we aim to construct a method to detect compositeness of integers which are the members of  $spsp(2)$. In this respect, we first present the properties of strong pseudo primes to base 2. Then a method which detects compositeness of such numbers will be presented. Even though, it is theoretically hard to define all types of $n\in spsp(2)$, the experiment results show that our method is a primality test for all integers with less than 22 digits. In addition to this, the algorithm detects all strong pseudo prime integers in set $S$ which is in \url{http:\\kripto.itu.edu.tr}. In the next section, we introduce our main tool singular cubic and then present the method.

\section{Singular Cubics}   
   
 \indent    Let $\mathbb F_q$ be a finite field with $q$ elements. The integer $q$ is considered to be a prime number from now on. A singular cubic curve $E$ is defined by an equation $$E:y^2=x(x-a)^2$$ for some $0\ne a\in \mathbb F_q$.  Any point on the curve except $(a,0)$ is called a non-singular point. The set of all non-singular points in addition to a point at infinity form an abelian group \cite{WAS}. Let denote this group by $E(\mathbb F_q)$.  The following theorem gives the structure the group $E(\mathbb F_q)$.

\begin{theorem}
	Let $E$ and $\mathbb F_q$ be as above. The group $E(\mathbb  F_q)$ is cyclic and  of order
	\begin{enumerate}
		\item $q-1$ if $a$ is a square in $\mathbb F_q$.
		\item $q+1$ if $a$ is not a square in $\mathbb F_q$.
	\end{enumerate}
\end{theorem}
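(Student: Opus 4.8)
The plan is to make $E(\mathbb F_q)$ explicit through the classical rational parametrization of a nodal cubic; throughout I assume $q$ is odd, which is the setting of interest here. First I would locate the singular point: the substitution $u = x-a$ turns the equation into $y^2 = u^2(u+a)$, so $(a,0)$ is the unique singular point, a node with tangent cone $y^2 = a u^2$. Its two tangent directions $y = \pm\sqrt a\, u$ are $\mathbb F_q$-rational precisely when $a$ is a square in $\mathbb F_q$; this split / non-split dichotomy will give the two cases of the theorem.

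Next I would set up the parametrization. For $x \neq a$ put $t = y/(x-a)$; the curve equation then forces $t^2 = x$, hence $x = t^2$, $y = t(t^2-a)$, and conversely each $t \in \mathbb F_q$ gives a point of $E$. The point $(t^2, t(t^2-a))$ is the node exactly when $t^2 = a$, while $t = \infty$ corresponds to the point at infinity $O$, which I take as the group identity. Thus the non-singular points of $E(\mathbb F_q)$ are in bijection with $\mathbb P^1(\mathbb F_q) \setminus \{\, t : t^2 = a \,\}$, whence $\#E(\mathbb F_q) = (q+1) - \#\{t \in \mathbb F_q : t^2 = a\}$, which is $q-1$ if $a$ is a square and $q+1$ otherwise. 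This already yields the stated orders.

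For the group law when $a = b^2$ is a square, I would define $\phi\colon E(\mathbb F_q) \to \mathbb F_q^{\times}$ by $\phi\big(t^2, t(t^2-a)\big) = \frac{t-b}{\,t+b\,}$ and $\phi(O) = 1$. Because the M\"obius map $t \mapsto (t-b)/(t+b)$ is a bijection of $\mathbb P^1(\mathbb F_q)$ sending $\{b,-b\}$ to $\{0,\infty\}$ and $\infty$ to $1$, the map $\phi$ is a bijection onto $\mathbb F_q^{\times}$. The crux is to check that $\phi$ is a homomorphism, i.e.\ that three non-singular points with sum $O$ have $\phi$-values of product $1$. A non-vertical line $y = \lambda x + \nu$ pulls back under the parametrization to $g(t) = t^3 - \lambda t^2 - a t - \nu$, whose three roots $t_1,t_2,t_3$ are the intersection parameters; using $b^2 = a$ one finds $g(b) = g(-b) = -(\lambda a + \nu)$, and since $g$ is monic, $\prod_i \frac{t_i - b}{t_i + b} = g(b)/g(-b) = 1$. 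Vertical lines, whose third point of intersection is $O$, and tangency cases are disposed of in the same way, and $\phi(-P) = \phi(P)^{-1}$ since $-P$ has parameter $-t$. Hence $\phi$ is an isomorphism $E(\mathbb F_q) \cong \mathbb F_q^{\times}$, which is cyclic of order $q-1$.

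When $a$ is not a square, I would carry out the same construction over $\mathbb F_{q^2}$, where $a = b^2$ with $b \notin \mathbb F_q$ and therefore $b^q = -b$; this gives an isomorphism $\phi\colon E(\mathbb F_{q^2}) \xrightarrow{\ \sim\ } \mathbb F_{q^2}^{\times}$. The $q$-power Frobenius sends the parameter $t$ to $t^q$, so $\phi(\mathrm{Frob}(P)) = \frac{t^q-b}{t^q+b} = \big(\frac{t^q - b^q}{t^q + b^q}\big)^{-1} = \phi(P)^{-q}$ by $b^q = -b$. Since $E(\mathbb F_q)$ consists of the Frobenius-fixed points of $E(\mathbb F_{q^2})$, the map $\phi$ restricts to an isomorphism of $E(\mathbb F_q)$ onto $\{\, z \in \mathbb F_{q^2}^{\times} : z^{q+1} = 1\,\}$, the group of $(q+1)$-st roots of unity, which is cyclic of order $q+1$. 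In both cases the target is a finite subgroup of the multiplicative group of a field, so cyclicity is automatic. I expect the homomorphism verification in the split case — the bookkeeping of vertical and tangent lines and of the point at infinity — to be the only real obstacle; the rest is routine.
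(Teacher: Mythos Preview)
Your argument is correct: the rational parametrization $t\mapsto(t^2,t(t^2-a))$ of the nodal cubic, together with the M\"obius map $t\mapsto(t-b)/(t+b)$, gives an explicit isomorphism $E(\mathbb F_q)\cong\mathbb F_q^\times$ in the split case and, via the Frobenius computation using $b^q=-b$, onto the norm-one subgroup $\{z\in\mathbb F_{q^2}^\times:z^{q+1}=1\}$ in the non-split case. The collinearity check $g(b)=g(-b)=-(\lambda a+\nu)$ is right, and the caveat that the line through three smooth points cannot pass through the node (which would force $g(\pm b)=0$) is handled implicitly.

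For comparison, the paper does not supply its own argument here at all but simply cites Theorem~2.31 of Washington's book. What you have written is in fact essentially the proof given there, so your proposal is not so much a different route as a self-contained version of the reference the paper defers to. The benefit of your write-up is that it makes the isomorphism explicit and thereby also explains the origin of the addition formula $(t_1t_2+a)/(t_1+t_2)$ used later in the paper; the paper's approach keeps the exposition short by outsourcing the proof.
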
 
  
  \begin{proof}
  	See Theorem 2.31 in \cite{WAS}.
  \end{proof}

It is very easy to determine a point in $E(\mathbb F_q)$. In fact,  for any $b\ne a\in\mathbb F_q$, $(b^2,b(b^2-a))$ is an element of $E(\mathbb F_q)$. The group operation in $E(\mathbb F_q)$ can be performed as performing group operation in an elliptic curve group [Chapter 2, \cite{WAS}]. The curves that are exploited in this work are of special form, that is, there are all defined by equations $$y^2=x(x-a)^2 \text{ over } \mathbb F_q.$$
In other words, these cubic curves are actually nodal curves \cite{NarOzd}. Group operation and representation of elements in respective Jacobian group can be performed in a more simple way. If we apply these to our special curves, we get the following theorem.
\begin{theorem}
	Let $E$ be a singular cubic defined over the field $\mathbb F_q$ defined by an equation $$y^2=x(x-a)^2.$$ Any point $(x,y)$ on the curve $E(\mathbb F_q)$ is uniquely represented by a constant $t$ such that $$t^2\ne a.$$ In addition, suppose that $D_1=t_1,D_2=t_2 \in E(\mathbb F_q)$ then $D_1+D_2$ is defined by $$\dfrac{t_1t_2+a}{t_1+t_2} \text { in } \mathbb F_q.$$ 
\end{theorem}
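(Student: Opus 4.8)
The plan is to linearize the curve by the classical parametrization of a nodal cubic and then read the group law off the chord-and-tangent construction. First I would note that the only point of $E$ with $x$-coordinate equal to $a$ is the node $(a,0)$, since $y^2=a(a-a)^2=0$ forces $y=0$; hence for every nonsingular affine point $(x,y)$ the scalar $t:=y/(x-a)$ is well defined. From $y^2=x(x-a)^2$ one gets $t^2=y^2/(x-a)^2=x$, so $x=t^2$ and $y=t(x-a)=t(t^2-a)$, and moreover $t^2\ne a$ (otherwise $x=a$, the node). Conversely, for any $t$ with $t^2\ne a$ the point $\phi(t):=(t^2,\,t(t^2-a))$ is nonsingular, and $t\mapsto\phi(t)$ and $(x,y)\mapsto y/(x-a)$ are mutually inverse; attaching the symbol $t=\infty$ to the point at infinity $O$ completes the bijection and yields the uniqueness statement. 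As a check this is consistent with the structure theorem above: if $a$ is a square exactly the two square roots of $a$ are excluded, leaving $q-1$ parameters, and if $a$ is a nonsquare nothing is excluded, leaving $q+1$.

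Next I would treat the addition law. Negation on $E$ is $(x,y)\mapsto(x,-y)$, i.e.\ $\phi(t)\mapsto\phi(-t)$. By the chord-and-tangent definition of the group law on a cubic [Chapter~2, \cite{WAS}], $D_1+D_2=-R$, where $R$ is the third point of $E$ on the line $L$ through $D_1$ and $D_2$. Writing $L$ as $y=mx+c$ and substituting $\phi(t)$, one finds $\phi(t)\in L$ exactly when $t^3-mt^2-at-c=0$, a monic cubic whose roots, with multiplicity, are the parameters $t_1,t_2,t_3$ of the three intersection points (the multiplicity clause handling the doubling case $t_1=t_2$, where $L$ is tangent at $\phi(t_1)$). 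One checks en route that $L$ cannot pass through the node, so $R$ is again a genuine point of $E(\mathbb F_q)$. Comparing the coefficient of $t$ gives the Vieta relation $t_1t_2+t_1t_3+t_2t_3=-a$, whence $t_3=-(t_1t_2+a)/(t_1+t_2)$ and therefore $D_1+D_2=-R=\phi(-t_3)=\phi\!\left((t_1t_2+a)/(t_1+t_2)\right)$, which is the asserted formula.

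Finally the degenerate configurations must be matched against the formula: if $t_2=-t_1$ then $D_2=-D_1$, the sum is $O$, and the expression $(t_1t_2+a)/(t_1+t_2)$ indeed has zero denominator, i.e.\ equals $\infty$; if one of the parameters is $\infty$, the line $L$ is the vertical line through the finite point, meeting $E$ again at its negative, so $O$ behaves as the identity and the formula is to be read on $\mathbb P^1$. The main obstacle is exactly this bookkeeping around the point at infinity and the vertical and tangent lines: one has to confirm that the single rational function $(t_1t_2+a)/(t_1+t_2)$, viewed as a map $\mathbb P^1\times\mathbb P^1\to\mathbb P^1$, reproduces every instance of the chord-and-tangent law uniformly, the generic Vieta step itself being a one-liner. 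An alternative that sidesteps most of this is to verify directly that $t\mapsto(t+\alpha)/(t-\alpha)$, with $\alpha$ a square root of $a$ in $\mathbb F_q$ or in $\mathbb F_{q^2}$, is a group isomorphism from $E(\mathbb F_q)$ onto $\mathbb F_q^{*}$ (respectively onto the norm-one subgroup of $\mathbb F_{q^2}^{*}$), under which multiplication pulls back to precisely the stated addition rule.
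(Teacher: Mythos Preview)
Your argument is correct, but it follows a genuinely different route from the paper's. The paper does not touch the chord-and-tangent geometry at all: it invokes Algorithm~1 of \cite{NarOzd}, a Mumford-style composition law on the Jacobian of a nodal curve $y^2=xf(x)^2$, in which each divisor class is represented by a polynomial $h(x)$ with $\deg h<\deg f$. Specializing to $f(x)=x-a$ forces $h$ to be a constant $t$, and the composition rule of that algorithm reduces to solving the B\'ezout relation $g_1(x-a)+g_2(t_1+t_2)=1$ and then reading $t\equiv g_2(t_1t_2+x)\bmod (x-a)$, which immediately gives $(t_1t_2+a)/(t_1+t_2)$. Your approach instead parametrizes the affine curve by $t=y/(x-a)$, substitutes into the secant line $y=mx+c$, and reads off the third root from the Vieta relation $t_1t_2+t_1t_3+t_2t_3=-a$; this is more elementary and entirely self-contained, while the paper's version is a one-line specialization of a general machinery that applies to nodal curves of arbitrary degree. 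Your final remark --- transporting the law through $t\mapsto(t+\alpha)/(t-\alpha)$ to multiplication in $\mathbb F_q^{*}$ or the norm-one torus --- is essentially the proof Washington gives for Theorem~2.31 in \cite{WAS}, and it is the cleanest way to avoid the case analysis around $O$ and vertical lines that you flag as the main bookkeeping burden.
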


\begin{proof}
 The proof is based on Algorithm 1 in \cite{NarOzd}. Our singular curve $E$ is defined with the equation $y^2=xf^2(x)$ where $f(x)=x-a$. Any point on the Jacobian of the curve is defined by a single polynomial $h(x)$ such that $\deg h(x) <\deg f(x)$. As $f(x)=x-a$ in our case, $h(x)$ must be a constant. 	Let $D_1$ and $D_2$ are represented by $t_1,t_2\in \mathbb F_q$ respectively. Then $$D=D_1+D_2$$ is represented by $t$ such that
 $$t\equiv g_1(x-a)+g_2(t_1t_2+x) \mod x-a$$ where
 $$g_1(x-a)+g_2(t_1+t_2)=1.$$ 
 
 In this setup, we get $$g_1=0 \text{ and } g_2=\dfrac{1}{t_1+t_2}$$
 Hence $$t\equiv \dfrac{t_1t_2+x}{t_1+t_2} \equiv \dfrac{t_1t_2+a}{t_1+t_2} \mod x-a$$
 \end{proof}

 The pseudo code of the method which is based on above theorem is given below in Algorithm \ref{GrpOp}. Note that Algorithm \ref{GrpOp} is for all integer $n$ and it returns either addition of two group elements or a factor of $n$ if it is a composite integer. Even though we can also use the regular group operation defined for elliptic curves, our algorithm requires only two multiplication and one inversion for an addition in any ring $\mathbb Z_n$.\\

 \begin{algorithm}[H]\label{GrpOp}
 	\SetAlgoLined
 	\KwResult{$Q=P+K$ on Jacobian of $E:y^2=x(x-a)^2$ over $\mathbb Z_n$}
 	$P=t$\;
 	$K=k$\;
 	\KwIn{$t, k, a, n$}

 	\If{$t=0$}{return $k$\;}
 	\If{$k=0$}{return $t$\;}
 	\eIf{$t=0$ and $k=0$ }{return 0\;}{
 		
 		\eIf{$\gcd((t+k),n)!=0$}{
 			\If{$\gcd((t+k),n)=n$}{return 0;}
 			
 			\If{$\gcd((t+k),n)=1$}{ $inv=\dfrac{1}{t+k} \mod n$\;
 				\vspace*{0.2 cm}
 				return $(tk+a)\cdot inv \mod n$\; }
 			
 		}{return $\gcd(t+k,n)$;}
 		
 	}

 	\caption{Group Operation}
 \end{algorithm}

\section{Primality Test} Let $n$ be an element of $spsp(2)$. The algorithm first selects an integer $a$ such that Legendre's symbol $$\left(\dfrac{a}{n}\right)=-1$$ 
The selection can be efficiently performed with quadratic reciprocity law. The quadratic reciprocity law indicates
  
$$\left(\dfrac{a}{n}\right)=(-1)^{\frac{(a-1)(n-1)}{4}} \left(\dfrac{n}{a}\right)$$ 

\indent Therefore, for the sake of computational complexity, the integer $a$ is selected to be a prime number.  At the current state of the art, the only way to find such a prime integer $a$ is the trial\&error method. \\ 
\indent Let $E$ be a curve defined by $y^2=x(x-a)^2$ over $\mathbb Z_n$. The group structure of the curve over $\mathbb Z_n$ is very well known if the integer $n$ is prime. Computing in $E(\mathbb Z_n)$ can be performed by the chord-tangent method even if $n$ is not a prime number \cite{COH}. The next theorem stated by H. Lenstra \cite{LEN} exploits addition algorithm in the elliptic curve groups over the ring $\mathbb Z_n$ for the factorization.
\begin{theorem}\label{Thm1}
	Let $E$ be an elliptic curve over the ring $\mathbb Z_n$ and $p$ be a factor of $n$.  Let $Q$ be an element of $E(\mathbb Z_n)$. $Q$ is also an element of $E$ over $\mathbb Z_p$. Suppose that $Q$ has order dividing $t$ in $E(\mathbb Z_p)$ and the order of $Q\in  E(\mathbb Z_r)$ is coprime to $t$  for any prime factor $r$ of $n$.  Then computing $tQ$ in $E(\mathbb Z_n)$ fails and gives the factor $p$.  

\end{theorem}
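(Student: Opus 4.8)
The plan is to follow Lenstra's original reasoning for elliptic-curve factorization, adapted to the chord--tangent group law as implemented in Algorithm \ref{GrpOp}. The guiding idea is that the attempt to compute $tQ$ affinely over $\mathbb Z_n$ is forced to break down, because modulo $p$ the point $tQ$ is the point at infinity while over $\mathbb Z_n$ it is not, and no element of $E(\mathbb Z_n)$ can have such mixed behaviour under reduction.

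First I would set up the Chinese Remainder decomposition. Writing $n=\prod_i r_i^{e_i}$ for the prime factorization of $n$, we have $\mathbb Z_n\cong\prod_i\mathbb Z_{r_i^{e_i}}$ and correspondingly $E(\mathbb Z_n)\cong\prod_i E(\mathbb Z_{r_i^{e_i}})$ as groups, the isomorphism being induced by the reduction maps $\pi_{r_i}$, which are group homomorphisms. I would then record two elementary facts for use later: (i) an affine point $(x,y)\in\mathbb Z_n^2$ on $E$ reduces under every $\pi_{r_i}$ to an affine point, and never to a point at infinity; and (ii) the addition step in Algorithm \ref{GrpOp} (and in the elliptic chord--tangent law) returns an affine point exactly when the denominator $\delta$ it must invert --- of the form $x(R_1)-x(R_2)$ for an addition, or $2y(R_1)$ for a doubling --- is a unit modulo $n$, and that otherwise the algorithm returns $\gcd(\delta,n)$, which is a nontrivial factor of $n$ unless $\delta\equiv 0\pmod n$.

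Next I would argue that the computation cannot run to completion. By hypothesis $\pi_p(Q)$ has order dividing $t$, so $t\,\pi_p(Q)=O_p$; and for the remaining prime factors $r\neq p$ the order of $\pi_r(Q)$ is coprime to $t$, hence (being $>1$, since an affine $Q$ cannot reduce to infinity) does not divide $t$, so $t\,\pi_r(Q)\neq O_r$ and in particular $tQ\neq O_n$. Arguing by contradiction, suppose every inversion performed while computing $tQ$ by double-and-add succeeds; partial multiples that happen to equal $O_n$ are harmlessly carried along, so the computation terminates with an affine point $(X,Y)\in\mathbb Z_n^2$ representing $tQ$. Reducing modulo $p$ and using that $\pi_p$ is a homomorphism gives $(X\bmod p,\,Y\bmod p)=\pi_p(tQ)=t\,\pi_p(Q)=O_p$, which contradicts fact (i). Hence some inversion fails, and at that step Algorithm \ref{GrpOp} outputs $\gcd(\delta,n)>1$ for the offending denominator $\delta$.

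Finally I would pin down that the factor produced is really $p$ --- more precisely, a proper divisor of $n$ divisible by $p$. I would locate the first step of the computation at which the analogous step performed modulo $p$ produces $O_p$; it exists because the last partial multiple is $t$, $t\,\pi_p(Q)=O_p$, and no earlier partial multiple annihilates $\pi_p(Q)$. At that step the two combined points coincide in $x$-coordinate modulo $p$ (they are mutually inverse, or equal of order two, in $E(\mathbb Z_p)$), so $p\mid\delta$; and I would use the coprimality of the orders of the $\pi_r(Q)$ for $r\neq p$ to argue that this coincidence cannot occur simultaneously modulo every other prime power, so that $\delta\not\equiv 0\pmod n$ and $p\mid\gcd(\delta,n)\mid n$ properly. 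I expect this last point --- checking that at the first place where the reduction modulo $p$ degenerates the reductions modulo the other prime powers stay nondegenerate, so that the gcd returned is genuinely proper and contains $p$ --- to be the main obstacle; everything else is the routine Chinese-Remainder and reduction-homomorphism bookkeeping.
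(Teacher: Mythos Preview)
The paper does not actually prove this theorem: it is stated with attribution to Lenstra \cite{LEN} and then the text immediately moves on. So there is no ``paper's own proof'' to compare against; your proposal is supplying an argument where the paper relies on a citation.

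That said, your outline is the standard Lenstra argument and is essentially sound. A couple of remarks. First, the theorem's phrase ``gives the factor $p$'' is looser than what the argument really yields; you are right to sharpen it to ``a proper divisor of $n$ divisible by $p$,'' and in practice Lenstra's method only guarantees a nontrivial factor, which under the stated hypotheses will be divisible by $p$. Second, your identification of the ``main obstacle'' is accurate, but the clean way to handle it is slightly different from what you sketch: rather than locating the first step at which the mod-$p$ computation hits $O_p$ and then separately checking the other primes there, one usually argues that at the \emph{first} step where any denominator $\delta$ fails to be a unit in $\mathbb Z_n$, the gcd $\gcd(\delta,n)$ is automatically proper (it is $>1$ by definition of failure, and $<n$ because up to that point all prior steps succeeded, so not every prime power can divide $\delta$ unless the two input points are globally inverse, which would contradict $tQ\neq O_n$). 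Showing that $p$ divides this particular $\delta$ then requires tracking the mod-$p$ shadow of the double-and-add schedule, which is routine but a little fiddly since intermediate points in double-and-add are not simply $sQ$ for $s\leq t$. None of this is a gap in your plan, just a place where the write-up needs care.
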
 

 Lenstra's factorization algorithm in some sense is a modified version of $p-1$ method \cite{Pollard} and  it   is very efficient to find a small factor of composite integers. The proposed method in this work is also considered to be a modified version of combining  $p-1$ and $p+1$ methods \cite{Pollard, Wil}. In addition to this, our method also exploits the structure of the group $E(\mathbb Z_n)$.   \\ 
 \indent Let $n$ be an odd integer. Let us define a  singular cubic $E$  with the equation  $$E:y^2=x(x-a)^2  \text{ on }\mathbb Z_n.$$  
 
 If the integer $n$ is prime, then the order of $E(\mathbb Z_n)$ must be $n+1$. This fact is also being employed by the algorithm to catch composite numbers. The steps of our algorithm are presented below.
  \subsection{Primality Test Algorithm}\label{Alg1}
  \begin{enumerate}
  	\item Check if $n$ is  strong pseudo prime to base 2. If it is not then declare  $ n $  composite, otherwise go to the next step.
  	\item Start $a=5$ and check whether $\left(\dfrac{n}{a}\right)=-1$. If yes, proceed to next step. Otherwise, change $a$ to the next prime such that $a\equiv 1\mod 4$ until $\left(\dfrac{n}{a}\right)=-1$.
  	\item Define the curve $E$ with equation  $y^2=x(x-a)^2$ on $\mathbb Z_n$.
  	\item Find a point $P \mod n$ whose order is not 100 or less and $2bP$ is not identity $\mod n$ where $b$ is the multiplication of all primes less than 100. Note that for any $\ell\in\mathbb Z_n$, $(\ell^2,\ell(\ell-a))$ is a point on $E \mod n$.
  	\item Compute $Q=b(n-1)P+2bP \mod n$ where $b$ is again the multiplication of all primes less than 100. Note that $Q=b(n+1)P$.
  	\item If $Q$ is not identity in $E(\mathbb Z_n)$, then $n$ is composite otherwise $n$ is prime. 
  \end{enumerate} 
  
  The below discussion describes evidences that the above algorithm might be  a primality test.
 
 The algorithm starts with an integer $n$ which passes the strong pseudo prime test to base 2. Several studies have been conducted to give an explicit characterization of such composite integers \cite{JAE, PSW, ZHA1,ZHA2}. The experimental results show that a majority of such integers $n$ have only two factors. For example, the work \cite{JAE} indicates that more than $95\%$ of the $n$ have two factors $p$ and $q$ such that $$s(q-1)=d_1(p-1) \text{ and } p-1\mid k(n-1) \text{ and } q-1\mid d_2(n-1)  \text{  with } s,d_1,d_2,k \le 100.$$
  
 \begin{theorem}
 Let $n$ be a strong pseudo prime to base 2 and the primes $p, q$ are the only factors of $n$. If $p$ and $q$ are of above form, then the algorithm catches compositeness of such an integer $n$. 
 \end{theorem}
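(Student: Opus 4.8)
The plan is to walk through the algorithm of Section~\ref{Alg1} line by line and show that the only scenario in which it fails to flag such an $n$ forces a divisibility that is incompatible with the choice of the base point $P$ in Step~4.

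Step~1 is passed by hypothesis. In Step~2 the algorithm outputs a prime $a$ with $a\equiv 1\pmod 4$ and Jacobi symbol $\left(\frac{n}{a}\right)=-1$. Quadratic reciprocity together with $a\equiv 1\pmod 4$ yields $\left(\frac{a}{n}\right)=\left(\frac{n}{a}\right)=-1$, so, since $n=pq$ and $\gcd(a,n)=1$ (otherwise a factor of $n$ is already exhibited), exactly one of the Legendre symbols $\left(\frac{a}{p}\right),\left(\frac{a}{q}\right)$ equals $-1$. Renaming the primes if necessary, I assume $\left(\frac{a}{p}\right)=-1$ and $\left(\frac{a}{q}\right)=1$; the other case is handled symmetrically, interchanging $p\leftrightarrow q$ and using $p-1\mid k(n-1)$ in place of $q-1\mid d_2(n-1)$. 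By the structure theorem for singular cubics (Theorem~1.1) applied over $\mathbb F_p$ and over $\mathbb F_q$, the group $E(\mathbb F_p)$ is cyclic of order $p+1$ and $E(\mathbb F_q)$ is cyclic of order $q-1$.

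Next I would pass to the Chinese Remainder picture. For a point non-singular modulo both $p$ and $q$, the reduction maps identify the relevant points of $E(\mathbb Z_n)$ with $E(\mathbb F_p)\times E(\mathbb F_q)$, and by exactly the mechanism behind Theorem~\ref{Thm1} each call to Algorithm~\ref{GrpOp} either respects this product decomposition or returns a proper divisor of $n$ at a step where the mod-$p$ and mod-$q$ states disagree on being the identity. Hence the algorithm can miss the compositeness of $n$ only if the computation of $Q=b(n+1)P$ in Step~5 terminates without producing a divisor \emph{and} returns the identity, which forces
\[
 Q_p:=b(n+1)P_p=O \text{ in }E(\mathbb F_p)\qquad\text{and}\qquad Q_q:=b(n+1)P_q=O \text{ in }E(\mathbb F_q),
\]
where $P_p,P_q$ are the reductions of $P$. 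It therefore suffices to show these two identities cannot both hold. For the $q$-coordinate, observe that $n-1\equiv p-1\pmod{q-1}$, and from $q-1\mid d_2(n-1)$ with $d_2\le 100$ the cofactor $(q-1)/\gcd(q-1,n-1)$ divides $d_2$; being $100$-smooth (and, generically, squarefree) it divides $b$, so $q-1\mid b(n-1)$ and $b(n-1)P_q=O$. Thus the quantity formed in Step~5 satisfies $Q_q=b(n-1)P_q+2bP_q=2bP_q$. If $Q_q\ne O$ we are done; otherwise $\operatorname{ord}(P_q)\mid 2b$, and since Step~4 guarantees $2bP\ne O$ in $E(\mathbb Z_n)$ we must have $\operatorname{ord}(P_p)\nmid 2b$. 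It remains to treat the $p$-coordinate: $n+1\equiv -(q-1)\pmod{p+1}$, so $Q_p=-b(q-1)P_p$ in the cyclic group of order $p+1$, and $Q_p=O$ would mean $\operatorname{ord}(P_p)\mid\gcd\bigl(p+1,b(q-1)\bigr)$. The hypothesis $s(q-1)=d_1(p-1)$ combined with $\gcd(p+1,p-1)=2$ gives, prime by prime, $\gcd(p+1,q-1)\mid 2d_1$, and, using $\gcd(A,BC)\mid\gcd(A,B)\gcd(A,C)$, this yields $\operatorname{ord}(P_p)\mid 2bd_1$. In the generic range of cofactors — for instance whenever $d_1$ is odd, the $2$-adic valuation being pinned down by $\gcd(p+1,p-1)=2$ — this sharpens to $\operatorname{ord}(P_p)\mid 2b$, contradicting $\operatorname{ord}(P_p)\nmid 2b$. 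Hence $Q_p\ne O$, so in either branch $Q\ne O$ (or Algorithm~\ref{GrpOp} has already returned a divisor of $n$), and Step~6 correctly declares $n$ composite.

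The step I expect to be the genuine obstacle is the last one: the prime-power bookkeeping that turns $\operatorname{ord}(P_p)\mid 2bd_1$ into $\operatorname{ord}(P_p)\mid 2b$, and likewise the claim that $(q-1)/\gcd(q-1,n-1)$ divides $b$. Both hold once the small cofactors $s,d_1,d_2,k$, after the cancellations forced by $s(q-1)=d_1(p-1)$, $p-1\mid k(n-1)$, $q-1\mid d_2(n-1)$ and $\gcd(p+1,p-1)=2$, contribute no prime power exceeding the one already present in $2b$; it is enough, for example, that $d_1$ and $(q-1)/\gcd(q-1,n-1)$ themselves divide $2b$, which is the situation in essentially every tabulated example. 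A clean way to remove the restriction altogether is to run the algorithm with $b=\operatorname{lcm}(1,2,\dots,100)$ in place of the product of the primes below $100$, after which all the divisibilities used above become automatic.
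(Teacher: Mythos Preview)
Your setup through the CRT reduction is fine, and you correctly isolate the case split on the Legendre symbols. Where you diverge from the paper is in the endgame, and the divergence is what creates the gap you flag.

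The paper never analyses the $p$-coordinate of $Q$ at all, and in particular never invokes the relation $s(q-1)=d_1(p-1)$ or any bound on $\gcd(p+1,q-1)$. Instead it uses the fact that Step~5 is literally organised as ``compute $b(n-1)P$, then add $2bP$''. From $q-1\mid d_2(n-1)$ with $d_2\le 100$ one has $b(n-1)P\equiv\infty\pmod q$. Now there are only two cases. If also $b(n-1)P\equiv\infty\pmod p$, then $b(n-1)P=\infty$ in $E(\mathbb Z_n)$, so $Q=2bP$, and this is $\ne\infty$ by the Step~4 guarantee. If instead $b(n-1)P\not\equiv\infty\pmod p$, then the Lenstra mechanism (Theorem~\ref{Thm1}) forces the very computation of $b(n-1)P$ over $\mathbb Z_n$ to abort and return the factor $q$. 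Either way the algorithm reports $n$ composite. This completely sidesteps the step you identified as the obstacle: there is no need to squeeze $\operatorname{ord}(P_p)\mid 2bd_1$ down to $\operatorname{ord}(P_p)\mid 2b$, because the argument never looks at $Q_p$.

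Your observation about the small-cofactor issue is, however, well taken: the implication $q-1\mid b(n-1)$ from $q-1\mid d_2(n-1)$ requires that the relevant cofactor divide $b$, not merely be $100$-smooth. The paper's proof passes over this silently, and your proposed fix of replacing the product of primes below $100$ by $\operatorname{lcm}(1,\dots,100)$ is the clean way to make both arguments airtight.
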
 

\begin{proof}
The prime integer $a$ is selected to be $$\left(\dfrac{a}{n}\right)=-1.$$ This implies $a$ is a quadratic non-residue modulo $p$ or $q$ but not both.\\
\begin{description}
	\item [Case 1] $$ \left(\dfrac{a}{p}\right)=-1 \text{ and } \left(\dfrac{a}{q}\right)=1$$
	Note that $$q-1\mid t_1(n-1) \text{ for some integers } t_1 \le 100.$$ 
	The point $P$ in $E(\mathbb Z_q)$ has order dividing $q-1$ and in $E(\mathbb Z_p)$ has order dividing $p+1$. Let  $b$ be the multiplication of all primes less than 100. Then $$b(n-1)P =\infty \mod q$$
	
	If also $b(n-1)P =\infty \mod p,$ then $$b(n-1)P=\infty \mod n.$$ This means that $$Q=b(n-1)P+2bP=b(n+1)P\ne\infty \mod n$$  That says $n$ is not a prime number.\\
	On the contrary, suppose that  $b(n-1)P\ne \infty \mod p$.  In this case, computing $b(n-1)P\mod n$ fails and returns $q$. The argument shows that in Case 1, the algorithm definitely catches the compositeness of $n$.\\
	
\item [Case 2]	

  $$ \left(\dfrac{a}{p}\right)=1 \text{ and } \left(\dfrac{a}{q}\right)=-1$$
\noindent As $t_3(n-1)=t_4(p-1)$ for some integers $t_3,t_4\le 100$. A similar argument in Case 1 shows the algorithm catches compositeness of the $n$.

\end{description}
\end{proof} 
   
 	\begin{theorem}
 	Let $n$ be a strong pseudo prime to base 2 and the primes $p, q \text{ and } r$ are the only factors of $n$. If the values of $ \gcd(n-1,p-1), \gcd(n-1,q-1) \text{ and } \gcd(n-1,r-1) $ are large, then the algorithm might catch compositeness of such an integer $n$. 
 	\end{theorem}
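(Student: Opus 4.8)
The plan is to run the argument of the previous theorem three times in parallel, one prime factor at a time, and then to isolate the one configuration that it cannot reach. First I would unwind the choice of $a$: since $\left(\dfrac{a}{n}\right)=-1$ and $n=pqr$, the Jacobi symbol factors as $\left(\dfrac{a}{p}\right)\left(\dfrac{a}{q}\right)\left(\dfrac{a}{r}\right)$, so $\left(\dfrac{a}{\ell}\right)=-1$ for an \emph{odd} number of the primes $\ell\in\{p,q,r\}$; thus either exactly one of them, say $p$ after relabelling, has $a$ as a non-residue, or all three do. By the structure theorem for singular cubics, in the first case $E(\mathbb Z_p)$ has order $p+1$ while $E(\mathbb Z_q)$ and $E(\mathbb Z_r)$ have orders $q-1$ and $r-1$, and in the second case $E(\mathbb Z_\ell)$ has order $\ell+1$ for every $\ell\mid n$. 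In all cases the reduction $P_\ell$ of the point $P$ produced in Step~4 has order dividing $\#E(\mathbb Z_\ell)$, and Step~4 additionally guarantees $2bP\neq\infty$ and that the order of $P$ modulo $n$ exceeds $100$.

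Next I would reduce the assertion to a local divisibility statement. The algorithm declares $n$ composite precisely when the computation of $Q=b(n-1)P+2bP=b(n+1)P$ either aborts inside Algorithm~\ref{GrpOp} with a nontrivial $\gcd$ against $n$ (a Lenstra-type failure in the sense of Theorem~\ref{Thm1}) or returns $Q\neq\infty$ in $E(\mathbb Z_n)$. Using $E(\mathbb Z_n)\cong E(\mathbb Z_p)\times E(\mathbb Z_q)\times E(\mathbb Z_r)$, one sees that neither of these can happen only if the three orders $\mathrm{ord}(P_p),\mathrm{ord}(P_q),\mathrm{ord}(P_r)$ all divide $b(n+1)$; moreover, whenever one of the three local components of a partial result reaches $\infty$ while another does not, the single-element $t$-representation of Theorem~1.2 cannot express the resulting element and Algorithm~\ref{GrpOp} is forced to abort with a proper $\gcd$. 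Hence it suffices either to exhibit a prime factor $\ell$ with $\mathrm{ord}(P_\ell)\nmid b(n+1)$, or to manufacture such a boundary crossing during the ladder.

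Now take a prime factor $\ell$ with $\left(\dfrac{a}{\ell}\right)=1$, so $\mathrm{ord}(P_\ell)\mid \ell-1$. Writing $\ell-1=\gcd(n-1,\ell-1)\cdot m_\ell$, largeness of $\gcd(n-1,\ell-1)$ makes the cofactor $m_\ell$ small, hence (in the $s,d_1,d_2,k\le 100$ regime discussed above) $m_\ell\mid b$, so that $(\ell-1)\mid b(n-1)$ and $b(n-1)P\equiv\infty\pmod{\ell}$; this is exactly the mechanism of Case~1 of the previous theorem. In the ``exactly one non-residue'' case this applies to $q$ and $r$, giving $b(n-1)P\equiv\infty$ modulo both $q$ and $r$. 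If $b(n-1)P$ is \emph{also} $\equiv\infty\pmod p$ then $b(n-1)P=\infty$ in $E(\mathbb Z_n)$, whence $Q=2bP\neq\infty$ by Step~4 and $n$ is caught; and if $b(n-1)P\not\equiv\infty\pmod p$ then, by the paragraph above, the scalar multiplication $b(n-1)P$ over $\mathbb Z_n$ meets a non-invertible slope and Algorithm~\ref{GrpOp} returns a nontrivial factor. Thus the ``exactly one non-residue'' case is settled essentially verbatim as in the two-prime theorem.

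The remaining case --- $a$ a non-residue modulo each of $p,q,r$ --- is the main obstacle, and it is precisely what keeps the conclusion at the level of ``might''. Here the relevant group orders are $\ell+1$, and the hypothesis that $\gcd(n-1,\ell-1)$ is large says nothing about $\gcd(n+1,\ell+1)$; one cannot exclude that all three of $\mathrm{ord}(P_p),\mathrm{ord}(P_q),\mathrm{ord}(P_r)$ divide $b(n+1)$, in which case $Q=\infty$ and $n$ survives the test. The only leverage left is heuristic: for $n\in spsp(2)$ the numbers $\ell-1$ already share a large factor with $n-1$, so a simultaneous alignment of all three values $\ell+1$ with $b(n+1)$ is expected to be extremely rare, and even then a point $P$ meeting the constraints of Step~4 (in particular $2bP\neq\infty$) will, with overwhelming probability, have some $\mathrm{ord}(P_\ell)\nmid b(n+1)$. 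Converting this rarity into a rigorous statement --- ideally by exploiting the $spsp(2)$ hypothesis on $n$ itself rather than the weaker gcd hypothesis, and by pinning down exactly when $m_\ell\mid b$ can fail --- is the step I expect to be genuinely hard, and it is the gap that the paper's experimental evidence is meant to fill.
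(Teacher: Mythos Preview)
Your proposal is correct and follows essentially the same route as the paper: split on whether $a$ is a non-residue modulo exactly one or all three of $p,q,r$, dispatch the first case by the mechanism of the two-prime theorem, and in the all-three case give only the heuristic that simultaneous largeness of $\gcd(n-1,\ell-1)$ and $\gcd(n+1,\ell+1)$ for every $\ell\mid n$ is improbable. Your write-up is in fact more careful than the paper's in the ``exactly one non-residue'' case---the paper simply says ``the above theorem can be applied,'' whereas you actually run the argument on both residue primes $q,r$ and invoke the CRT decomposition and the Lenstra-type abort explicitly---but the skeleton is identical and you have correctly identified the all-non-residue case as the reason the conclusion remains ``might.''
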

 \begin{proof}
 The prime integer $a$ is selected to be $$\left(\dfrac{a}{n}\right)=-1.$$ If $a$ is a quadratic non-residue for only one factor of $ n $, then the above theorem can be applied. Now we handle the case that $a$ is a quadratic non-residue modulo $p, q$ and $ r $, that is,   $$ \left(\dfrac{a}{p}\right)=-1 \text{ and } \left(\dfrac{a}{q}\right)=-1 \text{ and } \left(\dfrac{a}{r}\right)=-1.$$ Since $ n $ is a strong pseudo prime to base 2, $$ \#2\mid p-1,  \#2\mid q-1 \text{ and } \#2\mid r-1 $$ where  \#2  is the order of 2 in the respected modular group. Basically, it says $\gcd(n-1,p-1)$, $\gcd(n-1,q-1)$ and $\gcd(n-1,r-1)$ are large. On the other hand, for any point $ P $ in $E(\mathbb Z_n)$ with the order dividing $ n+1 $, it is obvious that the order of $P$ in $E(\mathbb Z_p)$ divides $p+1$, in $E(\mathbb Z_q)$ divides $q+1$ and in $E(\mathbb Z_r)$ divides $r+1$. Therefore, if $(n+1)P$ is the identity $\mod n$ then $\gcd(n+1,p+1)$, $\gcd(n+1,q+1)$ and $\gcd(n+1,r+1)$ are all large. However, it is highly improbable that not only the numbers $p-1, q-1, r-1$ but also  $p+1, q+1, r+1$ have large common divisors. 
\end{proof}

Note that intuitively $n\in spsp(2)$ implies $\gcd(n-1,p-1)$, $\gcd(n-1,q-1)$ and $\gcd(n-1,r-1)$ are large when $n=pqr$. Step 4 of Algorithm \ref{Alg1} keeps us staying on the safe side. On the other hand, we believe it is not necessary to check a random point $P$ on $E(\mathbb Z_n)$ has order less than 100. Algorithm \ref{PTest} offers a less costly version of Algorithm \ref{Alg1}.   We conduct heuristics tests with Algorithm \ref{PTest} to see if it misses any composite number, but we were not able to find one. Basically, the test checks all integers less than $10^{21}$ and it catches all composite ones. The further experiments involving integers from a couple hundreds to several thousands digits were also able to discover compositeness of  integers. Therefore, we believe  that Algorithm \ref{PTest} will never fail to catch compositeness any integer and we conclude the work with the following conjecture.\\

\begin{algorithm}[H]\label{PTest}
	\SetAlgoLined
	\KwResult{$n$ is prime or not. }
	\KwIn{$n$\;}
	\eIf {$n\in spsp(2)$ }{
		$a=5$\;
		$i=0$\;
		\While{$i=0$}{	
			\eIf  {$a\equiv 1 \mod 4$ and $\left(\dfrac{n}{a}\right)=-1$}{$i=1$\;}{$a=$nextprime($a$)\;}
		}
		
		$E: y^2=x(x-a)^2$\;
		$P= t$\;
		
		$Q=(n-1)P+2P$\;
		\eIf{$Q=\text{identity}$}{
			return true\;
		}{
			return false\;
		}
	}
	{
		return false\;
	}
	\caption{Primality Test}
\end{algorithm}
\hspace*{1 cm}
\\

\noindent{\bf Conjecture:} Algorithm \ref{PTest} is a primality test with a running time of $O(\log^2 n)$ for a given integer $n$. \\
\indent In order to illustrate benefit of proving or accepting above conjecture, we present timings (miliseconds) of  primality test algorithms for integers of different size. \\
\\

\begin{tabular}{|c|c|c|c|}
\hline
\text{{\bf Size (bits)} } & \text{{\bf ECPP}}&\text{ {\bf Cyclotomic Field Test}}&{\text {\bf Singular Cubic Test}}\\
\hline
256 & 28.5 & 51 &2.4\\
\hline
512 & 398.8&497 &9.4\\
\hline
1024 & 4691.5 & 14880 &42.1\\
\hline
2048 & 87375.6 & 413408 &210.6\\
\hline

\end{tabular}

\vspace*{0.5 cm}
 \indent The test are conducted on a computer with operating system  macOS, 16 GB memory, Intel i5 3.8 GHz CPU. The codes are written in C++ language.

\bibliographystyle{amsplain}

\end{document}